\documentclass[11pt]{article}
\usepackage{longtable,geometry}
\usepackage{graphicx}
\usepackage{amsmath, amsthm, amsopn, amssymb, enumerate}
\geometry{dvips}




\newtheorem{thm}{Theorem}
\newtheorem{lemma}[thm]{Lemma}

\newtheorem{cor}[thm]{Corollary}

\newtheorem*{thm*}{Theorem}
\newtheorem*{lemma*}{Lemma}

\theoremstyle{definition}


\renewcommand{\Pr}{\mathbb P}

\newcommand{\Z}{\mathbb{Z}}

\newcommand{\dist}{\rho}

\newcommand{\eps}{\varepsilon}

\newcommand{\set}[1]{\left\{ #1 \right\}}
\newcommand{\IDLA}{{\sf IDLA}}

\newcommand{\until}{\mapsto}


\author{Hugo Duminil-Copin \ \ \ Cyrille Lucas \ \ \ Ariel Yadin \ \ \ Amir Yehudayoff}


\title{Containing Internal Diffusion Limited Aggregation}

\begin{document}
\date{}

\maketitle
\begin{abstract}
Internal Diffusion Limited Aggregation (IDLA) is a model that describes
the growth of a random aggregate of particles from the inside out.
Shellef proved that IDLA processes
on supercritical percolation clusters of integer-lattices 
fill Euclidean balls, with high probability. 
In this article, we complete the picture and
prove a limit-shape theorem for \IDLA\ on such percolation clusters,
by providing the corresponding upper bound. 

The technique to prove upper bounds is new and robust: it only requires the existence of a ``good'' lower bound. Specifically, this way of proving upper bounds on \IDLA\ clusters is more suitable for random environments than previous ways, since it does not harness harmonic measure estimates.
\end{abstract}

\section{Introduction}


The Internal Diffusion Limited Aggregation (\IDLA) model was introduced by Diaconis and Fulton in \cite{DF}, and gives a protocol for recursively building a random aggregate 
of particles. At each step, the first vertex visited outside the current aggregate by a random walk started at the origin is added to the aggregate. 
This simulates the growth of an aggregate of particles from the inside out.

In a number of settings, this model is known to have a deterministic limit-shape,
meaning that a random aggregate with a large number of particles has a typical shape.
On $\Z^d$, Lawler, Bramson and Griffeath \cite{lawler1992internal} were the first to identify this limit-shape, in the case of simple random walks, as an Euclidean ball. Their result was later sharpened by Lawler \cite{lawler1995subdiffusive}, and was recently drastically improved with the simultaneous works of Asselah and Gaudill\`ere \cite{asselah2010logarithmic, asselah2010sub} and Jerison, Levine and Sheffield \cite{jerison2010logarithmic,jerison2011internal,jerison2010internal}. On other graphs, current knowledge is less precise. On groups with polynomial growth, the existence of the limiting shape is unknown, although Blach\`ere gave bounds on the cluster \cite{blachere2004internal}. On finitely generated groups with exponential growth, with a suitable metric, a limit-shape result was proved by Blach\`ere and Brofferio \cite{blachere2007internal}. Huss also studied \IDLA\ for a large class of random walks on general non-amenable graphs in \cite{huss13internal}.

Another interesting question about IDLA is whether the limit-shape is robust
to small perturbations of the underlying graph;
For example, on the infinite cluster of supercritical percolation cluster of $\Z^d$.
Shellef proved a sharp inner bound for the \IDLA \ model on the infinite cluster \cite{shellef15idla}. Figure \ref{simuperco} presents the \IDLA \ aggregate built on the supercritical bond percolation cluster.

\begin{figure}[ht]
\centering \label{simuperco}
\includegraphics[scale=0.65]{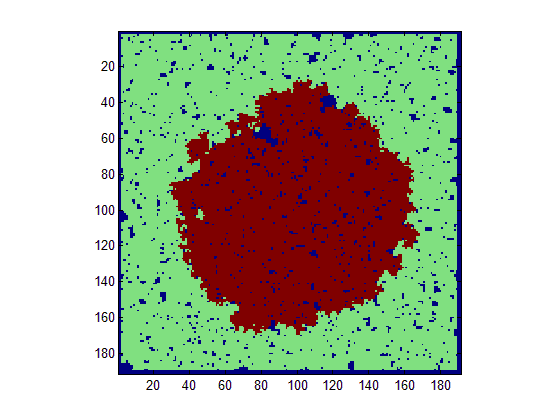}
\caption{\textit{\IDLA \ aggregate with 6,300 particles on the supercritical percolation cluster on $\Z^2$ (edges are deleted w.p. 0.4). Red points are in the aggregate, green points are in the cluster, and blue points are outside the cluster.}}
\end{figure}

To prove the existence of a limit-shape on the supercritical percolation
cluster on $\Z^d$,
it thus remains to show a sharp outer bound on \IDLA.
We provide such an outer bound, relying on Shellef's inner bound. 
\begin{thm}\label{perco}
Fix $d>0$ and $p>p_c(\Z^d)$ and let $\omega$ be the infinite cluster of 
percolation on $\Z^d$ with parameter $p$, 
conditioned on the origin $o$ belonging to $\omega$. 
Let $B_o(n)$ be the Euclidean ball of radius $n$ centered at the origin.
Let $b_o(n) = | \omega \cap B_o(n)|$, the size of the part of the cluster 
that is in $B_o(n)$.
Let $A_{b_o(n)}(o)$ be $\IDLA $ generated by $b_o(n)$ particles started at the origin.
Then, for every $\varepsilon > 0$  and a.s. every $\omega$, the \IDLA\ on $\omega$ a.s. satisfies:
$$B_o((1-\varepsilon) n)\subset A_{b_o(n)}(o)\subset B_o((1+\varepsilon) n) \qquad\forall \ n\text{ large enough}.$$
\end{thm}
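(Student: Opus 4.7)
Shellef \cite{shellef15idla} already supplies the inner bound $A_{b_o(n)} \supset B_o((1-\varepsilon)n)$, so the task is to derive the matching outer bound from the lower bound alone, as the introduction advertises. The central tool will be the Abelian property of IDLA: the final aggregate does not depend on the order of release, and aggregates obtained from different numbers of particles couple monotonically.

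Fix $\varepsilon > 0$ and set $N = b_o(n)$. The plan is to compare the $N$-particle process to an auxiliary IDLA with $M = b_o((1+\varepsilon/2)n)$ particles, coupled so that $A_N \subset A_M$. Applying Shellef's lower bound at scale $(1+\varepsilon/2)n$ forces $A_M \supset B_o((1+\varepsilon/3)n)$ with high probability; since $|A_M| = M$, at most $o(n^d)$ vertices of $A_M$ can lie outside $B_o((1+\varepsilon/2)n)$. Suppose toward contradiction that $y \in A_N \subset A_M$ satisfies $|y| > (1+\varepsilon)n$. Because IDLA aggregates are always connected (each new site attaches to the boundary of the current cluster), there is a nearest-neighbor $\omega$-path inside $A_M$ linking $\partial B_o((1+\varepsilon/3)n)$ to $y$, of length at least $(2\varepsilon/3)n$.

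The core of the argument is then a shell-by-shell counting step: for each radius $r$ between $(1+\varepsilon/3)n$ and $(1+\varepsilon)n$, bound the number of the first $N$ particles that can settle at distance $r$ using only the lower bound applied at scale $r$, together with the volume growth of $\omega$ coming from $p > p_c$. Summed across the shells, these bounds should exceed the available budget $N$, yielding a contradiction; a Borel--Cantelli step along $n \in \N$ then upgrades the resulting high-probability conclusion to the almost-sure claim. The main obstacle is precisely this shell-counting step: replacing the Green's function and harmonic-measure estimates of Lawler--Bramson--Griffeath—which on $\Z^d$ underlie every classical outer-bound proof of this type—by a robust comparison that uses only Shellef's lower bound. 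This is where the novelty of the method must live: the IDLA filling radius at each scale is pinned down by the lower bound itself, and one leverages this self-reference, rather than harmonic-measure computations on the random cluster, to control individual-particle escape probabilities.
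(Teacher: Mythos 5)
There is a genuine gap, and you essentially name it yourself: the ``shell-by-shell counting step'' that you defer is the entire content of the proof, and the surrounding scaffolding you do provide cannot substitute for it. The volume-based contradiction does not work: coupling $A_N\subset A_M$ with $M=b_o((1+\varepsilon/2)n)$ and using Shellef's inner bound at the larger scale only shows that $|A_M\setminus B_o((1+\varepsilon/3)n)|=O(\varepsilon n^d)$, while a connected tentacle reaching from $\partial B_o((1+\varepsilon/3)n)$ out to radius $(1+\varepsilon)n$ costs only $O(\varepsilon n)$ vertices. For $d\ge 2$ this is far below the volume budget, so connectivity plus particle counting can never rule out long thin arms; some probabilistic input about where individual particles settle is unavoidable. (Also, the stray-volume bound is $\Theta(\varepsilon n^d)$, not $o(n^d)$ as written.)

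The paper supplies exactly the missing mechanism, and it is worth seeing how the ``self-reference'' you allude to is actually cashed in. Lemma~\ref{lem:hit prob} converts the lower bound into a hitting estimate: if $\Pr[B\subset A_t(x\until B)]$ is of order $1$, then a single random walk from $x$ hits any $Q\subset B$ before exiting $B$ with probability at least of order $|Q|/t$, because at least $|Q|$ of the $t$ generating walks must have hit $Q$. Combined with $(VG)$ this gives Lemma~\ref{lem:exit prob}: a walk crossing an annulus of width $r$ around $B_o(n)$ settles in the unoccupied part with probability bounded below by a constant $\eta$, \emph{provided} the occupied part of the annulus has size at most $\epsilon r^d$ --- which is why the annulus width is tied to the number of surviving particles ($r=k^{1/d}$ in Lemma~\ref{lem:fundamental}). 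This yields geometric decay $k_{j+1}\le(1-\delta)k_j$ of the paused-particle counts across successive annuli, so the total overshoot telescopes to $\sum_j k_j^{1/d}=O(k_0^{1/d})=O(\varepsilon n)$ once $k_0\le c_1\varepsilon^d b_o(n)$, and the latter is supplied by the strengthened lower bound $(LB)$ (Shellef's estimate in its \emph{stopped} form, at the origin). Note too that the hitting lemma must be applied at all scales $n^{1/d^3}\le r\le n$ and all centers $x\in B_o(n+r)$, which is why the paper needs the uniform, multi-center hypothesis $(wLB)$ rather than a single application of Shellef's theorem at the origin. None of this is present in your sketch, so the proposal does not constitute a proof.
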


More generally, Theorem~\ref{thm:main thm} below states that given a sharp inner bound on 
\IDLA \ for a graph (with some regularity assumptions), a sharp outer bound exists
as well. 
The inner and outer bounds together prove the almost sure convergence to a limit-shape. 
The regularity assumptions are quite mild,
and the theorem should be useful in great generality. 
Since the percolation cluster satisfies the necessary regularity assumptions,
Theorem~\ref{thm:main thm} implies Theorem~\ref{perco}.
We also mention other environments that
satisfy the regularity assumptions, like Cayley graphs with polynomial growth
(more details follow).

Let us mention that, despite the fact that the upper bound on $\Z^d$ is not intrinsically harder than the lower bound, they invoke different ingredients. The lower bound usually harnesses estimates on the Green function, while the upper bound requires the use of upper bounds on harmonic measures. The Green function is fairly well understood in several (random) environments such as supercritical percolation, random conductances with elliptic condition, and so forth. This is not the case for harmonic measures. 
The techniques developed in this article allow to bypass this difficulty. 
Our main result relates an upper bound on \IDLA \ directly to
a lower bound.  Roughly speaking, we show that if one knows that balls of a certain metric 
are contained in the aggregate, and not many particles are left over,
then one can deduce an upper bound and shape theorem.

\subsection{Definition of IDLA}

Let $G$ be a graph.
Let $S \subset G$ be a finite subset of $G$. 
In order to define \IDLA, first define adding one 
particle started at $x$ to an existing aggregate $S$.
For $x \in G$, denote by $A(S;x)$ \
the \IDLA \ aggregate obtained as follows:  Let $\xi = (\xi(0),\xi(1),\ldots)$
be a random walk on $G$ started at $\xi(0) = x$ and let $t_S$ be the first time this walk is not in $S$.  Define
$$A(S;x) := S \cup \set{\xi(t_S)}.$$

It is standard to consider a slightly more general process, where the growth of the aggregate is stopped at certain stopping times,
{\em e.g.}, upon exiting a set $T$.
Denote by $A(S;x \until T)$ 
the aggregate obtained by letting a particle
randomly walk from $x$, but pausing it if it exits $T$, which is defined as follows. 
Let $\xi$ be a random walk on $G$ started at $x$. 
Let $t_S$ be the first time this walk is not in $S$ as above,
and let $t_T$ be the first time $\xi$ exits $T$.
Define $$A(S;x \until T) := S \cup \set{ \xi(t_S \wedge (t_T-1)) }.$$
To keep track of the position of a paused particle, define 
$$P(S;x \until T) = \left\{
\begin{array}{ll}
\xi(t_T-1) & \text{if $t_T < t_S$,}\\
 \perp & \text{ otherwise,}
\end{array}
\right.$$
so $\perp$ means that the particle is already ``absorbed''
in the aggregate.


Given vertices $x_1,\ldots,x_k$ in $G$ and a set $T$, define $A(S;x_1,\ldots,x_k\until T)$ to be the \IDLA \ aggregate formed from an existing aggregate $S$ by $k$ particles, started at $x_1,\ldots,x_k$, and  paused upon exiting $T$.
That is, define inductively:
$S_0 = S$, 
$S_j = A(S_{j-1} ,x_j\until T)$ for $j \in \{1,\ldots,k\}$, 
and $A(S;x_1,\ldots,x_k \until T) = S_k$.
Again, to keep track of paused particles, 
define $P(S;x_1,\ldots,x_k \until T)$ to be the
sequence of particles paused in this process; 
Formally,
if $p_j = P(S_{j-1}; x_j  \until T)$ for $j \in \{1,\ldots,k\}$,
then $P(S;x_1,\ldots,x_k \until T)$ 
is the sequence $(p_j : p_j \neq \perp )$. When particles are not stopped, we define the aggregate similarly and we denote it by $A(S;x_1,\ldots,x_k)$.

One reason to keep track of these paused particles 
is the so called {\em Abelian property} of IDLA:
\begin{align}
A(S;x_1,\ldots,x_k)  \ \ &\textrm{has}  \textrm{ the same distribution as} \nonumber\\
  &A\big( A(S;x_1,\ldots,x_k \until T);P(S;x_1,\ldots,x_k \until T) \big) .\label{eqn:abelian property}
\end{align}
Equation \eqref{eqn:abelian property}
says that in order to sample $A(S,x_1,\ldots,x_k)$,
one can sample $A(S;x_1,\ldots,x_k \until T)$ while pausing particles
upon exiting $T$ and keeping track of them via $P(S;x_1,\ldots,x_k \until T)$,
and then restart the paused particles on the obtained aggregate
$A(S;x_1,\ldots,x_k \until T)$.
For more details, see \cite{DF, lawler1992internal}.

We are mostly interested in $n$ particles starting at just one point.
For an integer $n \geq 0$,
by $A_n(x)$ we denote the \IDLA \ aggregate with $n$ particles started at $x$,
that is, 
$$A_n(x) = A
(\emptyset;\underbrace{x,\ldots,x}_{\text{\tiny $n$ times}}) .$$

We also focus on pausing particles according to a metric $\rho$. Define the {\em ball} of radius $r$ around $x$ to be
$B_x(r) = \set{ y \ : \ \rho(x,y) < r}$,
and denote its size by $b_x(r) = |B_x(r)|$. As above, set
$$A_n(x\until r) = A(\emptyset;x,\ldots,x\until B_x(r))$$
and
$$P_n(x\until r) = P(\emptyset;x,\ldots,x\until B_x(r)).$$

\subsection{Assumptions}

We now make a few assumptions. The first two assumptions are independent of the \IDLA \ process. We will always compare the \IDLA\ cluster to a ball for a certain metric $\rho$ on $G$, on which we now make a few hypotheses. Assume that $\rho$ satisfies the following assumptions:
\medbreak
\noindent
\textbf{Continuity $(C)$:} the metric is dominated by the graph distance $d_G$ on $G$: 
there exists $c>0$ such that
\begin{equation}\nonumber
\rho(x,y) \le c\cdot d_G(x,y) \quad \forall \ x,y\in G.
\end{equation}

\medbreak
\noindent
\textbf{Regular volume growth $(VG)$:} there exist $c,d>0$ such that for every $n>0$,
\begin{equation}\nonumber
\label{eqn:volume growth}
 \frac1cr^d \le |B_x(r)| \le cr^d\quad\quad\forall \ 
x\in B_o(n)\text{ and }n^{1/d^3}\le r\le n.
\end{equation}
The conditions on $r$ and $n$ allow to consider inhomogeneous environments. Keep in mind that the percolation cluster contains arbitrary finite graphs infinitely often.


We will also make the following assumption on \IDLA:
\medbreak
\noindent
\textbf{Weaker lower bound $(wLB)$:} there exists $\alpha >0$ such that
for every $n>0$,
\begin{align}\nonumber
\label{eqn:reasonable lower bound}
\Pr[ B_x(r) \subset A_{b_x(r / \alpha )}(x \until r)]
\ge \alpha \quad \quad \forall \ x \in B_o(n + r)\text{ and }n^{1/d^3}\le r \le n.
\end{align}
In words, with noticeable probability,
when releasing order $b_x(r)$ particles at $x$ the aggregate contains
$B_x(r)$. This assumption is easy to verify for many \IDLA \ processes. In order to prove the existence of a limiting shape, we will assume a stronger lower bound for the aggregate grown around the origin, as we will see in the next section.



\subsection{Main results}

The first theorem relates a lower bound to an upper bound
in well-behaved environments. 



\begin{thm}
\label{thm:main thm}
Let $G$ be a graph and $\rho$ be a metric on $G$ satisfying conditions $(C)$, $(VG)$ and $(wLB)$. Then, there exists a constant $c_1  > 0$ such that
for any $\eps >0$,
\begin{align*}
\Pr \big[ A_{b_o(n)}(o) \not\subset B_o((1+ \eps) n) \ \ i.o. \big] & \leq
\Pr \big[ |A_{b_o(n)}(o\until n)| < b_o(n) (1 - c_1 \eps^d)  \ \ i.o.  \big] .
\end{align*}
\end{thm}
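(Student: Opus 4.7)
The plan is to use the Abelian property \eqref{eqn:abelian property} at $\partial B_o(n)$ to reduce the theorem to a bound on how far a controlled number of particles released on the sphere can extend the aggregate. By \eqref{eqn:abelian property}, $A_{b_o(n)}(o)$ has the same distribution as $A(A_n; P_n)$, where $A_n := A_{b_o(n)}(o \until n) \subset B_o(n)$ is the paused aggregate and $P_n := P_{b_o(n)}(o \until n)$ is the list of paused particles on $\partial B_o(n)$. Let $k_n := |P_n| = b_o(n) - |A_n|$; on the complement of the RHS event one has $k_n \le c_1 \varepsilon^d b_o(n) \le C \varepsilon^d n^d$ by $(VG)$. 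Setting
$$E_n := \{|A_n| \ge b_o(n)(1-c_1\varepsilon^d)\} \cap \{A_{b_o(n)}(o) \not\subset B_o((1+\varepsilon)n)\},$$
it suffices to prove $\sum_n \Pr[E_n] < \infty$ and then invoke Borel--Cantelli.

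A first, deterministic observation is a volume balance: each IDLA particle adds exactly one vertex, so $|A_{b_o(n)}(o)| = b_o(n) = |B_o(n) \cap \omega|$, forcing $|A_{b_o(n)}(o) \setminus B_o(n)| \le k_n$. Thus on $E_n$ at most $C\varepsilon^d n^d$ vertices of the aggregate can lie outside $B_o(n)$, and what remains is to show that all of them sit within the shell $B_o((1+\varepsilon)n) \setminus B_o(n)$ with summable failure probability.

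The core estimate is that, conditionally on $A_n, P_n$ and on $k_n \le c_1\varepsilon^d n^d$, the probability that $A(A_n; P_n) \not\subset B_o((1+\varepsilon)n)$ is summable in $n$. My approach is a ``reverse'' multi-scale use of $(wLB)$: cover the annulus by balls $B_{y}(r_0)$ of radius $r_0 = c \varepsilon n$ centered on an $r_0$-net $\mathcal Y$ of polynomial size (by $(C)$ and $(VG)$). For any $y$ with $\rho(o,y) \ge (1+\varepsilon)n$, reaching $y$ from $\partial B_o(n)$ requires particles to traverse a chain of such balls. Since $(wLB)$ says that $b_x(r_0/\alpha) \asymp (\varepsilon n)^d$ particles at a point $x$ fill $B_x(r_0)$ with probability at least $\alpha$, an iterated Abelian decomposition along the chain should show that each ball must absorb a positive fraction of the particles passing through it. Accumulating these costs, reaching any fixed $y$ should require $\gtrsim (\varepsilon n)^d$ exits from $\partial B_o(n)$, with a multiplicative constant depending on $\alpha, d$ and the $(VG)$ constants; this contradicts $k_n \le c_1\varepsilon^d n^d$ once $c_1$ is chosen small, outside a stretched-exponentially small event. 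A union bound over $\mathcal Y$ and Borel--Cantelli then close the argument.

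The main obstacle is this third step: inverting the one-shot ball-filling lower bound $(wLB)$ into an upper bound on aggregate extent without any harmonic-measure input. Making the multi-scale bookkeeping rigorous --- i.e., arguing that particles really are absorbed at each intermediate ball rather than ``tunneling'' coherently through --- is the delicate point, and is precisely the robust substitute for harmonic-measure estimates advertised in the introduction.
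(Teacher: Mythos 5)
Your first reduction is exactly the paper's: apply the Abelian property at $\partial B_o(n)$, write $A_{b_o(n)}(o) \sim A(A_n;P_n)$ with $k_n = |P_n| = b_o(n) - |A_n|$, and note that on the complement of the right-hand event $k_n \le c_1\eps^d b_o(n)$. The missing piece is the one you flag yourself, and the mechanism you sketch for it does not work. If you cover the annulus $B_o((1+\eps)n)\setminus B_o(n)$ by balls of the \emph{fixed} radius $r_0 = c\eps n$, a chain of such balls crossing the annulus has length $O(1)$, so "each ball absorbs a positive fraction $\delta$ of the particles passing through" only yields that $(1-\delta)^{O(1)}k_n$ particles survive the crossing --- a constant fraction of $k_n$, hence no contradiction with $k_n \asymp \eps^d n^d$. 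Likewise, per particle, a hitting probability $\eta$ in each of $O(1)$ balls gives escape probability $(1-\eta)^{O(1)}$, which is a constant, so the expected number of particles reaching distance $(1+\eps)n$ is still of order $k_n$. The claim that reaching a point $y$ at distance $(1+\eps)n$ "requires $\gtrsim(\eps n)^d$ exits" is what must be \emph{proved}, and it does not follow from your covering: the aggregate need only grow a thin tentacle to $y$, not fill the balls $B_y(r_0)$ along the way.

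The paper's resolution is to make the annulus width \emph{adaptive to the current particle count}, not to $\eps n$. Starting from $k_0 = k_n$ paused particles, it releases them into the annulus $B_o(n_j + k_j^{1/d})\setminus B_o(n_j)$, whose volume is $\asymp k_j$ by $(VG)$; Lemmas~\ref{lem:hit prob}--\ref{lem:fundamental} (which do use $(wLB)$ exactly as you propose, converting the ball-filling bound into a single-walk hitting estimate) show that a fixed fraction $\delta$ of the $k_j$ particles is absorbed there, except on an event of probability $p^{k_j}$. Hence $k_{j+1}\le(1-\delta)k_j$ w.h.p., the widths $k_j^{1/d}$ decay geometrically, and the total protrusion is $\sum_j k_j^{1/d} \le C k_0^{1/d} \le C(c_1\eps^d b_o(n))^{1/d} \lesssim c_1^{1/d}\eps n$, which is $<\eps n$ for $c_1$ small; the iteration stops once $k_J\le n^{1/(d+1)}$, after which continuity $(C)$ bounds the remaining growth by $cn^{1/(d+1)}$, and the accumulated error probabilities $np^{n^{1/(d+1)}}$ are summable, so Borel--Cantelli closes the argument. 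This adaptive scale choice --- annuli of width $k_j^{1/d}$ so that volume matches the number of particles in flight --- is the key idea your proposal lacks, and without it the multi-scale bookkeeping you describe cannot be made to produce the required $(\eps n)^d$ threshold.
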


The previous theorem is especially useful when a lower bound is known.  Indeed, although we need a statement stronger than a simple lower bound, the usual proofs of lower bounds always yield the fact that the \IDLA\ process (almost) fills all large enough balls when stopped on their boundary:

\medbreak\noindent\textbf{Lower bound $(LB)$:} $|A_{b_o(n)}(o\until n)|/b_o(n)$ converges almost surely to 1. 
\medbreak
The difference between (LB) and the ``usual'' notion of
a lower bound is that the ``usual'' lower bound says
that the aggregate {\em eventually} contains a large ball, 
whereas (LB) says that the aggregate contain a large ball,
even when stopped upon exiting a (slightly bigger) ball.


\begin{cor}
Let $G$ be a graph and $\rho$ a metric on $G$ satisfying conditions $(C)$, $(VG)$, $(wLB)$, and $(LB)$, then for every $\varepsilon>0$, a.s.
$$A_{b_o(n)}(o)\subset B_o((1+\varepsilon) n)\quad \quad \forall  \
n \ \text{large enough}.$$
\end{cor}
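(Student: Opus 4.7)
The plan is to deduce the corollary as a direct consequence of Theorem~\ref{thm:main thm}, using assumption $(LB)$ to kill the right-hand side of the inequality it provides. Concretely, Theorem~\ref{thm:main thm} furnishes a constant $c_1>0$ such that for every $\varepsilon>0$,
\[
\Pr\big[A_{b_o(n)}(o)\not\subset B_o((1+\varepsilon)n) \ \ i.o.\big] \ \leq \ \Pr\big[|A_{b_o(n)}(o\until n)|<b_o(n)(1-c_1\varepsilon^d) \ \ i.o.\big].
\]
So the whole task reduces to showing that the right-hand probability vanishes, and the natural path is to check that the ratio $|A_{b_o(n)}(o\until n)|/b_o(n)$ is almost surely close to $1$ for all large $n$.

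First, I would observe that by the very construction of the paused process, every particle that is not paused contributes a distinct new vertex, and no paused particle contributes a vertex. Consequently $|A_{b_o(n)}(o\until n)|\leq b_o(n)$, and the deficit $b_o(n)-|A_{b_o(n)}(o\until n)|$ is exactly the number of particles paused on the boundary of $B_o(n)$. Assumption $(LB)$ is precisely the statement that this ratio tends almost surely to $1$, so a.s.\ for all $n$ large enough (depending on $\varepsilon$ and the realization),
\[
\frac{|A_{b_o(n)}(o\until n)|}{b_o(n)} \ > \ 1-c_1\varepsilon^d.
\]

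It follows that almost surely the event $\{|A_{b_o(n)}(o\until n)|<b_o(n)(1-c_1\varepsilon^d)\}$ occurs only for finitely many $n$, so its i.o.\ probability is zero. Plugging this into the inequality supplied by Theorem~\ref{thm:main thm} yields
\[
\Pr\big[A_{b_o(n)}(o)\not\subset B_o((1+\varepsilon)n) \ \ i.o.\big]=0,
\]
which is exactly the statement that a.s.\ for all sufficiently large $n$, $A_{b_o(n)}(o)\subset B_o((1+\varepsilon)n)$.

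There is essentially no obstacle here: Theorem~\ref{thm:main thm} already does all the heavy lifting by converting the lower bound into an upper bound, and $(LB)$ is tailor-made to annihilate the remaining error term. The only point worth stressing in the write-up is that $(LB)$ controls the stopped aggregate (particles paused on the boundary of $B_o(n)$), not merely the unstopped aggregate; this is precisely why Theorem~\ref{thm:main thm} is phrased with $o\until n$ rather than with $A_{b_o(n)}(o)$ on the right-hand side, and why the remark in the excerpt emphasizes the distinction between ``usual'' lower bounds and $(LB)$.
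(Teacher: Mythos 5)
Your proposal is correct and is exactly the (implicit) argument of the paper, which states the corollary without a separate proof precisely because Theorem~\ref{thm:main thm} plus $(LB)$ yield it immediately: almost sure convergence of $|A_{b_o(n)}(o\until n)|/b_o(n)$ to $1$ forces the event on the right-hand side to occur only finitely often a.s., so both i.o.\ probabilities vanish and Borel--Cantelli-type reasoning is not even needed beyond that observation.
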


Even though the main application of the theorem above and its corollary will be in the case of supercritical percolation, they apply in great generality and we believe that they will be useful in future works on \IDLA.



\section{An upper bound on \IDLA \ on percolation clusters}

Let us prove that Theorem~\ref{thm:main thm} and its corollary imply Theorem~\ref{perco}. In this case, $\rho$ will be the Euclidean distance. 
By Theorem~\ref{thm:main thm}, we only need to check that $(C)$, $(VG)$, $(wLB)$ and $(LB)$ are satisfied: 
\begin{description}
\item[(C)] Since $\rho\le d_\omega$ deterministically, property $(C)$ is satisfied for every $\omega$ with the constant $1$. 

\item[(VG)] Barlow proved in \cite{barlow} that $(VG)$ is satisfied for the distance $d_\omega$ for almost every environment. A classical result of \cite{antal1996chemical} easily implies that for almost every environment $\omega$, there exists $c_1=c_1(\omega)>0$ such that for every $n>0$,
\begin{equation}\label{comparison distances}
\rho(x,y)\le d_\omega(x,y) \le c_1 \rho(x,y)\quad\forall \ x,y\in B_o(n):c_1\log n\le\rho(x,y)\le n
\end{equation}
so that $(VG)$ is also satisfied for $\rho$ with a possibly different constant (the result also follows from \cite{barlow}). 

\item[(wLB)] In \cite{shellef15idla}, Shellef proved that for any $\varepsilon>0$, there exists $\eta>0$ such that the following holds for almost every environment $\omega$: there exists $c_2=c_2(\omega)>0$ such that
\begin{equation*}
\Pr\left[B_o((1-\varepsilon) n)\subset A_{b_o(n)}(o\until n)\right]\ge 1-\frac {c_2}{n^{d+2}}
\end{equation*} and $\mathbb P_p[c_2\ge \lambda]\le e^{-\lambda^\eta}$ for all $\lambda>0$. All together, this implies that for almost every environment $\omega$, there exists $c_3=c_3(\omega)>0$ such that for every $n>0$,
\begin{equation}\label{crucial}
\Pr\left[B_x((1-\varepsilon) r)\subset A_{b_x(r)}(x\until r)\quad\forall x\in B_o(n)\text{ and }n^{1/d^3}\le r\le n\right]\ge 1-\frac {c_3}{n}.
\end{equation} 
The condition $(wLB)$ follows readily. The result in Shellef deals with the event $B_o((1-\varepsilon) n)\subset A_{b_o(n)}(o)$ (particles are not stopped at distance $n$), but the proof actually implies this stronger result.

\item[(LB)] Finally, note that the comparison between distances \eqref{comparison distances} and $(VG)$ imply that $$b_o(n)-b_o((1-\varepsilon) n)~\le~ c_4\varepsilon^d b_o(n)$$
for some constant $c_4=c_4(\omega)>0$ depending on the environment, so that \eqref{crucial} implies $(LB)$ for almost every environment.
\end{description}



\section{Proof of Theorem~\ref{thm:main thm}}

From now on, we fix a graph $G$ satisfying $(C)$, $(VG)$ and $(wLB)$. Constants in the proof always depend only on the constants involved in $(C)$, $(VG)$ and $(wLB)$, {\em i.e.} $c,d$ and $\alpha$.

The following lemma shows that a lower bound on the aggregate
implies a lower bound on hitting probabilities. It is a general statement not invoking any of the conditions $(C)$, $(VG)$ or $(wLB)$. In the following, we make a slight abuse of notations: $\xi$ will denote a random walk as well as its trace.


\begin{lemma}
\label{lem:hit prob}
Let $Q \subset B \subset G$ and $x\in B$.
Let $\xi$ be a random walk started at $x$ and stopped on exiting $B$.
For any $t>0$,
$$\Pr [ \xi \cap Q \neq \emptyset ]  \geq
\Pr [ B \subset A_t(x\until B) ] \cdot  |Q| / t.$$
\end{lemma}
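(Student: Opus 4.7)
The plan is to couple the hitting probability of a single random walk started at $x$ to the outcome of running $t$ particles in the paused IDLA process $A_t(x\until B)$, and then use a double counting of how many of those $t$ particles have traces that intersect $Q$.

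Concretely, fix an independent sequence $\xi_1,\dots,\xi_t$ of random walks started at $x$, each stopped upon exiting $B$, and use them to drive the IDLA process defining $A_t(x\until B)$: particle $j$ walks along $\xi_j$ but stops at the first time it leaves the current aggregate $S_{j-1}$ (or it is paused at the step before exiting $B$). Thus the IDLA trace of particle $j$ is a prefix of $\xi_j$. Let
\[
N = \#\bigl\{\, j \in \{1,\dots,t\} \ :\ \xi_j \cap Q \neq \emptyset\,\bigr\}.
\]
By independence and the fact that each $\xi_j$ has the same distribution as $\xi$,
\[
\E[N] \;=\; t\cdot\Pr[\xi \cap Q \neq \emptyset].
\]

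For the matching lower bound on $\E[N]$, condition on the event $\{B \subset A_t(x\until B)\}$. On this event every $q \in Q \subset B$ belongs to the aggregate, so there is a (necessarily unique) particle $j(q)$ that added $q$; the IDLA trace of this particle ends at $q$, and in particular $q \in \xi_{j(q)}$. The map $q \mapsto j(q)$ is injective because each particle contributes at most one vertex to the aggregate, so at least $|Q|$ distinct indices $j$ satisfy $\xi_j \cap Q \neq \emptyset$. Hence $N \geq |Q|\cdot \mathbf 1_{\{B \subset A_t(x\until B)\}}$, which gives
\[
t\cdot \Pr[\xi \cap Q \neq \emptyset] \;=\; \E[N] \;\geq\; |Q|\cdot \Pr[B \subset A_t(x\until B)],
\]
and rearranging yields the claim.

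There is not really a hard step here; the only thing to be careful about is the distinction between the full walk $\xi_j$ stopped on exiting $B$ and the truncated IDLA trace of particle $j$. The coupling chosen above makes the truncated trace a prefix of $\xi_j$, so intersection with $Q$ for the former implies intersection for the latter, which is exactly what the double counting needs.
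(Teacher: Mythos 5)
Your proof is correct and is essentially the paper's argument: both couple the $t$ independent stopped walks to the aggregate, observe that on the event $\{B \subset A_t(x\until B)\}$ at least $|Q|$ of the walks must visit $Q$ (since each point of $Q$ is the endpoint of some particle's truncated trace, which is a prefix of the corresponding full walk), and then average over the $t$ walks. The only cosmetic difference is that the paper phrases the averaging via a uniformly chosen random index $J$ rather than via $\E[N]$ and linearity of expectation.
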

\noindent The above lemma is most useful when $t$ is chosen so that
$\Pr [ B \subset  A_t(x\until B) ]$ is of order $1$. 

\begin{proof}
Let $\xi_1,\ldots,\xi_t$ be the $t$ independent random walks started at $x$
and stopped on exiting $B$
that generate the aggregate $ A_t(x\until B)$.
Let $J \in \{1,\ldots,t\}$ be a uniformly chosen index independent of the random walks.
Consider the set 
$\Gamma$ of $j \in \{1,\ldots,t\}$ so that
$\xi_j$ hits $Q$ before exiting $B$.
Since $Q \subset B$, 
the inclusion $B \subset  A_t(x\until B)$ implies $|\Gamma| \geq |Q|$.
Since $J$ is independent of $\xi_1,\ldots,\xi_t$,
\begin{align*}
\Pr [ \xi_J \cap Q \neq \emptyset \ | \ B \subset  A_t(x\until B) ] \geq
\Pr [ J \in \Gamma \ | \ B \subset  A_t(x\until B) ] 
\geq |Q|/t .
\end{align*}
The lemma follows since the distribution of $\xi_J$
is that of a random walk started at $x$ and stopped when exiting $B$.
\end{proof}

By assumption on $G$,
we can hence get the following hitting probability estimate, which states that a random walk hits a set, whose complement has size at most $\epsilon r^d$, with a probability that is bounded away from zero.

\begin{lemma}\label{lem:exit prob}
There exist $\epsilon,\eta >0$ such that 
for large enough $n$ and $ n^{1/(d(d+1))}<r<n$, the following holds.
Let $x \in B_o(n)$ and let $S \subset B_o(n+r)$ be so that
$|S \setminus B_o(n) | \leq \epsilon r^d$.
Let $\xi$ be a random walk started at $x$ and
stopped upon exiting $B_o(n+r)$.
Then,
$$\Pr \left[\xi\cap \big(B_o(n+r) \setminus 
(S\cup B_o(n))\big) \neq \emptyset \right] \ge \eta .$$
\end{lemma}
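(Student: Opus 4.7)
The plan is to combine Lemma~\ref{lem:hit prob} with the strong Markov property. First I would let the walker penetrate the annulus $B_o(n+r)\setminus B_o(n)$ by roughly $r/2$, and only then apply the hitting-probability bound, with a target ball placed around the walker's current position. Waiting this long is crucial so that the target ball sits entirely inside the annulus and therefore meets $Q := B_o(n+r)\setminus(S\cup B_o(n))$ in a set of size $\Omega(r^d)$.

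Precisely, I would set $T = \inf\{t \geq 0 : \rho(o, \xi_t) \geq n+r/2\}$ and $z = \xi_T$. By $(C)$ the $\rho$-distance to $o$ changes by at most $c$ per step, so $\rho(o, z) \in [n+r/2,\, n+r/2+c]$. For $r$ large compared to $c$ (automatic for $n$ large, since $r > n^{1/(d(d+1))}$), this simultaneously guarantees $B_z(r/4) \subset B_o(n+r)\setminus B_o(n)$ and that $T$ is strictly less than the exit time of $B_o(n+r)$. By the strong Markov property at $T$, it suffices to prove a uniform bound: for every such $z$, a random walk from $z$ hits $Q$ before leaving $B_o(n+r)$ with probability at least some $\eta>0$.

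To establish this uniform bound, I would apply $(wLB)$ at $z$ with radius $r/4$. This is legitimate because $d \geq 2$ gives $1/(d(d+1)) > 1/d^3$, so $r > n^{1/(d(d+1))}$ forces $r/4 \geq (n+r)^{1/d^3}$ for $n$ large, and one can take the ambient scale in $(wLB)$ to be $n+r$. Thus $\Pr[B_z(r/4) \subset A_{b_z(r/(4\alpha))}(z \until r/4)] \geq \alpha$. By $(VG)$, $|B_z(r/4)| \geq c^{-1}(r/4)^d$ and $b_z(r/(4\alpha)) \leq c(r/(4\alpha))^d$. Since $B_z(r/4)\cap B_o(n)=\emptyset$, the assumption $|S\setminus B_o(n)|\leq \epsilon r^d$ gives $|Q\cap B_z(r/4)| \geq c^{-1}(r/4)^d - \epsilon r^d$, which is at least $\tfrac{1}{2} c^{-1}(r/4)^d$ once $\epsilon$ is chosen small enough in terms of $c$ and $d$. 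Lemma~\ref{lem:hit prob} applied with $B = B_z(r/4)$, starting point $z$, and target $Q\cap B_z(r/4)$ then yields a hitting probability at least a positive constant $\eta=\eta(\alpha,c,d)$; and because $B_z(r/4)\subset B_o(n+r)$, this lower bound is inherited when the stopping set is enlarged from $B_z(r/4)$ to $B_o(n+r)$.

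The main obstacle is the geometric placement of the target ball. If one tried to apply Lemma~\ref{lem:hit prob} at the first time the walk exits $B_o(n)$, the natural ball of radius $r/4$ around that exit point straddles $\partial B_o(n)$, and under only $(C)$ and $(VG)$ one cannot rule out that almost all of its volume lies back inside $B_o(n)$; this would wreck the $\Omega(r^d)$ lower bound on the intersection with $Q$. Waiting for the walker to reach depth $\approx r/2$ inside the annulus makes that intersection empty and lets the whole argument close in a single application of Lemma~\ref{lem:hit prob}.
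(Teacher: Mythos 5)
Your argument is correct and is essentially the paper's own proof: the paper likewise stops the walk at the first point $y$ with $\dist(y,B_o(n))\ge r/2$, applies the Markov property there, places a ball (of radius $r/3$ rather than your $r/4$) around $y$, checks it lies in $B_o(n+r)\setminus B_o(n)$, and combines $(VG)$, $(wLB)$ and Lemma~\ref{lem:hit prob} exactly as you do. The only differences are cosmetic constants, plus your (welcome) explicit verification that $r/4$ falls in the admissible range for $(wLB)$, which the paper leaves implicit.
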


\begin{proof}
For every path $\gamma$ from
inside $B_o(n)$ to outside $B_o(n+r)$,
let $y(\gamma)$ be the first vertex on $\gamma$
so that $\dist(y(\gamma),B_o(n)) \geq r/2$.
Denote by $Y$ the set of all $y(\gamma)$ for such paths $\gamma$.
Every path from $x$ to outside $B_o(n+r)$ must hit $Y$.
By Markov's property,
it thus suffices to prove the theorem for starting points $y \in Y$.
Fix $y \in Y$.

Let $B = B_y(r/3)$ and $Q= B \setminus S$. 
By $(VG)$ and by assumption on $S$,
$$|Q| \ge \frac1c(r/3)^d  -\epsilon r^d
\ge \frac 1c(r/4)^d ,$$
with $\epsilon = 4^{-d}/c$.
By $(wLB)$
with $t = b_y(r/(3\alpha))$,
$$\Pr[ B \subset A_t(y \until r/3)] \ge \alpha .$$
Let $\xi$ be a random walk started at $y$ and stopped on exiting $B$.
Lemma~\ref{lem:hit prob} and $(VG)$ imply that
$$\Pr [ \xi \cap Q \neq \emptyset ]  \geq \alpha \frac{(r/4)^d}{c b_y(r/(3\alpha))} 
\geq \alpha (\alpha/4)^{d}/ c^2 = :\eta .$$
Note that 
$r/2\le \rho(y,B_o(n)) \leq r/2 + c$ thanks to the definition of $Y$ and $(C)$.
Therefore, $\rho(y,G \setminus B_o(n+r)) \geq r - r/2 - c > r/3$
for $n$ large, and so $B \subset B_o(n+r) \setminus B_o(n)$. We deduce
$$\Pr \left[\xi\cap \big(B_o(n+r) \setminus 
(S\cup B_o(n))\big) \neq \emptyset \right] \ge 
 \Pr [ \xi \cap Q \neq \emptyset ]\ge \eta.$$
\end{proof}

After analyzing the behavior of a single particle,
we can analyze the behavior of the whole aggregate.
The following lemma says that, with high probability, 
a constant fraction of the aggregate is absorbed
in a wide enough (yet still very fine) annulus.

\begin{lemma}\label{lem:fundamental}
There exist $\delta>0$ and $p < 1$ such that
for all $n$ large enough, for all $n^{1/(d+1)}<k<n$
and $x_1,\ldots,x_k \in B_o(n)$, and for all $S \subset B_o(n)$,
$$\Pr\big[ |A(S;x_1,\ldots,x_k  \until B_o(n+k^{1/d}) )\setminus S| \le \delta k\ \big] \le p^k.$$
\end{lemma}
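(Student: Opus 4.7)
Set $r := k^{1/d}$; the hypotheses $n^{1/(d+1)} < k < n$ place $r$ in the range $(n^{1/(d(d+1))}, n)$ required by Lemma~\ref{lem:exit prob}, so we may invoke its constants $\epsilon, \eta > 0$. Write $S_i := A(S; x_1, \ldots, x_i \until B_o(n+r))$ and $Z_i := |S_i \setminus S_{i-1}| \in \set{0,1}$, the indicator that the $i$-th particle is absorbed at a previously unseen vertex. Then $|A(\cdots) \setminus S| = \sum_{i=1}^k Z_i$, and $Z_i = 1$ precisely when the $i$-th walk (from $x_i$, stopped upon exiting $B_o(n+r)$) first leaves $S_{i-1}$ before that exit happens.

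The key step is a uniform conditional lower bound on $Z_i$ as long as the aggregate has not grown too far outside $B_o(n)$. Introduce the stopping time
$$\tau := \inf\set{i \ge 1 : |S_i \setminus B_o(n)| > \epsilon k} \wedge (k+1),$$
and let $\mathcal{F}_{i-1}$ denote the $\sigma$-algebra of the first $i-1$ walks. For $i \le \tau$, we have $S_{i-1} \subset B_o(n+r)$ and $|S_{i-1} \setminus B_o(n)| \le \epsilon k = \epsilon r^d$, so Lemma~\ref{lem:exit prob} applied to the walk from $x_i \in B_o(n)$ yields $\Pr[Z_i = 1 \mid \mathcal{F}_{i-1}] \ge \eta$; indeed, the event ``walk hits $B_o(n+r) \setminus (S_{i-1} \cup B_o(n))$'' forces the walk to leave $S_{i-1}$ inside $B_o(n+r)$. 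On the complementary event $\{\tau \le k\}$, the aggregate has already gained $|S_\tau \setminus S| \ge |S_\tau \setminus B_o(n)| > \epsilon k$ new vertices, so the failure event $\{\sum Z_i \le \delta k\}$ is vacuous whenever $\delta \le \epsilon$.

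On $\{\tau > k\}$, couple $(Z_i)_{i=1}^k$ with an auxiliary sequence $(W_i)$ that agrees with $Z_i$ for $i \le \tau$ and is an independent $\mathrm{Bernoulli}(\eta)$ for $i > \tau$; then $\Pr[W_i = 1 \mid \text{past}] \ge \eta$ uniformly in $i$, so $\sum_{i=1}^k W_i$ stochastically dominates a $\mathrm{Bin}(k, \eta)$, and a Chernoff bound gives $\Pr[\sum W_i \le \delta k] \le p^k$ for any $\delta < \eta$ and some $p = p(\delta, \eta) < 1$. Since $\sum Z_i = \sum W_i$ on $\{\tau > k\}$, taking $\delta < \min(\epsilon, \eta)$ completes the proof.

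The only delicate point is the bookkeeping at $\tau$: once $\tau$ has fired we lose the conditional Bernoulli lower bound on $Z_i$, and a naive application of Chernoff to $(Z_i)$ is not legitimate. The fix is precisely the case split above; $\{\tau \le k\}$ is already a ``good'' event (giving $> \epsilon k$ new vertices for free), while on $\{\tau > k\}$ the splicing in of fresh independent Bernoullis restores a genuine sub-Bernoulli structure suitable for exponential concentration. Everything else is a direct application of Lemma~\ref{lem:exit prob}.
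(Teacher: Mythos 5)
Your proof is correct and follows essentially the same route as the paper: Lemma~\ref{lem:exit prob} supplies a conditional probability at least $\eta$ of absorbing each particle at a fresh vertex while the overflow outside $B_o(n)$ stays below $\epsilon r^d$, and binomial domination plus a Chernoff bound finishes. The paper sidesteps your stopping-time/case-split bookkeeping by restricting attention to the first $\lfloor \epsilon k \rfloor$ particles, for which $|A_j \setminus B_o(n)| \le j \le \epsilon r^d$ holds deterministically, and dominating by a $\mathrm{Bin}(\lfloor \epsilon k \rfloor, \eta)$ rather than a $\mathrm{Bin}(k,\eta)$.
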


\begin{proof}
Let $r=k^{1/d}$. Fix $\epsilon,\eta$ as in Lemma~\ref{lem:exit prob}.
Let $\xi_1,\ldots,\xi_k$ be the random walks 
started at $x_1,\ldots,x_k$ that generates the aggregate.
Let $k' = \lfloor \epsilon k \rfloor \leq \epsilon r^d$.
For $j \in \{1,\ldots,k'\}$, denote
$$A_j= A(S;x_1,\ldots,x_j\until B_o(n+r) ).$$
Since $|A_j\setminus B_o(n)|\le j\le \varepsilon r^d$, Lemma~\ref{lem:exit prob} implies that for all $j\in\{1,\ldots,k'\}$,
$$\Pr\left[\xi_{j+1} \cap \big(B_o(n+r)\setminus A_j\big) \neq \emptyset\ |\ A_j\right] \ge \eta.$$
Therefore, $|A(S;x_1,\ldots,x_k \until B_o(n+r)) \setminus S|$ dominates a 
$(k',\eta)$-binomial random variable. 
Thus, there exist $\delta > 0$ and $p <1$
depending only on $\epsilon,\eta$ such that
$$\Pr\big[|A(S;x_1,\ldots,x_k \until B_o(n+r)) \setminus S | \le \delta k\big] \le p^{k}.$$
\end{proof}

We now turn to the proof of Theorem \ref{thm:main thm}.
The proof consists of inductively constructing a sequence of aggregates $A_j$
by pausing the particles at different distances $n_j$ from the origin.
If $k_j$ is the number of paused particles, we choose the next distance $n_{j+1}$, 
at which we pause the particles again,
in terms of $n_j$ and $k_j$.
We iterate this procedure until there are less than $n^{1/(d+1)}$ paused particles.
At this point, there are too few particles to matter.

\begin{proof}[Proof of Theorem~\ref{thm:main thm}]
Fix $n>0$. Define $A_j, n_j, P_j,k_j$ as follows:
\begin{itemize}
\item Let $n_0=n$ and $A_0=A_{b_o(n)}(o\until B_o(n))$. 
Let $P_0 = P_{b_o(n)}(o \until B_o(n))$ and
let $k_0 = |P_0|$.

\item For $j \geq 0$, define
$$n_{j+1}=
\left\{
\begin{array}{ll}
n_j+k_j^{1/d} & \text{if }k_j > n^{1/(d+1)} , \\
\infty & \text{otherwise}.
\end{array} \right .$$
Let
$A_{j+1} = A( A_j;P_j \until B_o(n_{j+1}))$.
Let $P_{j+1} = P(A_j;P_j\until B_o(n_{j+1}))$
and let $k_{j+1} =|P_{j+1}|$. 
\end{itemize}
Let $J$ be the (random) first time at which $k_J \leq n^{1/(d+1)}$.
By construction, $A_{j}=A_{J+1}$ for any $j \ge J+1$.
The Abelian property \eqref{eqn:abelian property}
guarantees that $A_{J+1}$ and $A_{b_o(n)}(o)$ have the same law.

By construction, $A_J \subset B_o(n_J)$.
Since $k_J \leq n^{1/(d+1)}$ and $\rho$ is continuous $(C)$,
the $k_J$ last particles cannot grow long arms. Formally,
$A_{J+1} \subset B_o (n_J + c n^{1/(d+1)} ).$


Since $J \le n$, by Lemma \ref{lem:exit prob},
for some $\delta = \delta(\alpha,c,d) < 1$,
\begin{align*}
\Pr [ \exists \ 1 \leq j \leq J \ : \ k_j > (1-\delta)^j k_0 ] & \leq
\Pr [ \exists \ 1 \leq j \leq J \ : \ k_{j} > (1-\delta) k_{j-1} ] \leq n p^{n^{1/(d+1)} } .
\end{align*}
This implies that with probability at least $1-n p^{n^{1/(d+1)}}$,
$$n_J = n+k_0^{1/d}+\cdots+k_{J-1}^{1/d} \le n+k_0^{1/d} \cdot \frac{1}{1-(1-\delta)^{1/d}} ,$$
and if $n_J + C n^{1/(d+1)} > (1+\eps) n$, then
$k_0^{1/d} > (\eps n - C n^{1/(d+1)} ) (1- (1-\delta)^{1/d})$.
So, using $(VG)$, for any $\eps>0$,
\begin{align*}
\set{ A_{b_o(n)}(o) \not\subset B_o((1+ \eps) n) }  \subseteq
\set{ k_0 > c_1  \eps^d b_o(n) } \cup 
\set{ \exists \ 1 \leq j \leq J \ : \ k_j > (1-\delta)^j k_0  } .
\end{align*}
Using the Borel-Cantelli Lemma, we deduce the result easily.
\end{proof}

\paragraph{Acknowledgements}We wish to thank Itai Benjamini for suggesting the problem
to us. This paper was written during the visit of the first
two authors to the Weizmann Institute in Israel. The
first author was supported by the EU Marie-Curie RTN CODY, the ERC AG
CONFRA, as well as by the Swiss {FNS} and the Weizmann institute. 
The fourth author is a Horev fellow and is supported by the Taub Foundation,
and by grants from ISF and BSF.

\begin{flushright}
\footnotesize\obeylines
  \textsc{Universit\'e de Gen\`eve}
  \textsc{Gen\`eve, Switzerland}
  \textsc{E-mail:} \texttt{hugo.duminil@unige.ch}
$ $\\
\textsc{MODAL'X-Universit\'e Paris X}
  \textsc{Paris, France}
  \textsc{E-mail:} \texttt{cyrille.lucas@u-paris10.fr}
$ $\\
   \textsc{Ben Gurion University}
  \textsc{Beer Sheva, Israel}
  \textsc{E-mail:} \texttt{yadina@bgu.ac.il}
 $ $\\ 
  \textsc{Technion-IIT}
  \textsc{Haifa, Israel}
  \textsc{E-mail:} \texttt{amir.yehudayoff@gmail.com}
\end{flushright}

\end{document}